\title{A counterexample to the easy direction of the geometric Gersten conjecture.}
\author{David Bruce Cohen}
\DeclareMathOperator{\ZZ}{\ensuremath{\mathbb{Z}}}
\DeclareMathOperator{\CC}{\ensuremath{\mathbb{C}}}
\DeclareMathOperator{\HH}{\ensuremath{\mathbb{H}}}
\DeclareMathOperator{\Lip}{Lip}
\def\To{{\rightarrow}}
\def\la{{\langle}}
\def\ra{{\rangle}}
\theoremstyle{plain}
\newtheorem{theorem}{Theorem}[section]
\newtheorem{lemma}[theorem]{Lemma}
\theoremstyle{definition}
\newtheorem*{definition}{Definition}
\begin{document}

\maketitle

\begin{abstract}
For finitely generated groups $H$ and $G$, equipped with word metrics, a translation-like action of $H$ on $G$ is a free action such that each element of $H$ acts by a map which has finite distance from the identity map in the uniform metric. For example, if $H$ is a subgroup of $G$, then right translation by elements of $H$ yields a translation-like action of $H$ on $G$. Whyte asked whether a group having no translation-like action by a Baumslag-Solitar group must be hyperbolic, where the free abelian group of rank $2$ is understood to be a Baumslag-Solitar group. We show that the converse question has a negative answer, and in particular the fundamental group of a closed hyperbolic 3-manifold admits a translation-like action by the free abelian group of rank $2$.
\end{abstract}

\section{Introduction.}
A metric space $X$ is said to be uniformly discrete if it has a minimum distance, meaning
$$\inf\{d(x,y):x,y\in X; x\neq y\}>0,$$
and said to have bounded geometry if for all $r>0$, there is some $N_r>0$ such that every $r$-ball has cardinality at most $N_r$. If $X$ satisfies both of these conditions, it is said to be a UDBG space \cite[\S 2]{whyte}. For example, a finitely generated group equipped with a word metric is a UDBG space. More generally, if $X$ is the vertex set of a connected graph of bounded degree, equipped with the metric that assigns length $1$ to each edge, then $X$ is a UDBG space. In \cite{whyte}, Whyte introduced the following notion.

\begin{definition}\cite[Definition 6.1]{whyte}
Let $X$ be a UDBG space. A translation-like action of a group $H$ on $X$ is a free action by maps at a finite distance from the identity. That is, the action satisfies the following rules.

\begin{itemize}
\item For $x\in X$ and $h\in H$, if $h \cdot x= x$, then $h=1_H$.
\item For all $h\in H$, the set $\{d(x,h\cdot x):x\in X\}$ is bounded
\end{itemize}
\end{definition}

We will mostly be interested in the case where $H$ is finitely generated and the UDBG space $X$ is a finitely generated group $G$ equipped with a word metric. In this case, a translation-like action of $H$ on $G$ is just a vertex-surjective embedding of a disjoint union of copies of a Cayley graph of $H$ into a Cayley graph of $G$ (since an orbit of a translation-like action of $H$ on $G$ embeds the Cayley graph of $H$ into some Cayley graph of $G$).

\paragraph{Translation-like actions generalize subgroups.} If $H$ is a finitely generated subgroup of $G$, then $H$ acts translation-like on $G$ via
$$h\cdot g=gh^{-1}$$
for $h\in H$ and $g\in G$. Many properties which pass to subgroups of $G$ also pass to groups which act translation-like on $G$. For instance, Jeandel \cite[Theorem 3]{jeandel} has shown that if $G$ has no weakly aperiodic subshift of finite type, then the same is true for finitely presented groups acting translation-like on $G$. Whyte used this idea to give ``geometric" versions of several famous conjectures about how the properties of $G$ constrain its subgroups \cite[\S 6]{whyte}.

\paragraph{The Geometric Von Neumann-Day Conjecture.} The Von Neumann-Day conjecture (disproven by Olshanskii \cite{olshanskii}) asserts that a group $G$ should be nonamenable if and only if $G$ contains a free subgroup. Whyte used translation-like actions to formulate and prove a geometric version of this conjecture---namely, that $G$ is nonamenable if and only if $\ZZ\ast\ZZ$ acts translation-like on $G$ \cite[Theorem 6.1]{whyte}.

\paragraph{The Geometric Burnside Problem.} The Burnside problem (answered in the negative by Golod and Shafarevich \cite{gs}) asks whether every infinite finitely generated group contains a $\ZZ$-subgroup. The geometric Burnside problem asks whether every infinite, finitely generated group admits a translation-like action of $\ZZ$. Seward \cite[Theorem 1.4]{seward} proved that the answer to this question is yes.

\paragraph{The Geometric Gersten Conjecture.} Recall that for nonzero integers $m,n$, the Baumslag-Solitar group $BS(m,n)$ is the group presented by $\la a,b|ab^{m}a^{-1}=b^n\ra$, and in particular $BS(1,1)\cong\ZZ^2$. It is known that these groups do not embed in hyperbolic groups. The Gersten conjecture \cite[Q 1.1]{bestvina}---usually attributed to Gromov---roughly states that for a group satisfying some finiteness properties, hyperbolicity should be equivalent to having no Baumslag-Solitar subgroup. We do not know whether Gersten actually asked this question, although \cite{gersten} asks whether every finitely presented subgroup of a hyperbolic group must be hyperbolic. \cite{brady} showed that this was false, and hence that the Gersten conjecture is false for finitely presented groups (weaker versions remain open).

The geometric Gersten conjecture states being hyperbolic is equivalent to having no translation-like action by any $BS(m,n)$. In point of fact, Whyte only asked about the ``hard" direction---whether a group which is not hyperbolic must admit a translation-like action of a Baumslag-Solitar group---and only for 2-dimensional groups. By an observation of Jeandel\cite[\S 5]{jeandel}, knowing the hard direction for all amenable groups would imply that every group (except for virtually cyclic groups) has a weakly aperiodic subshift of finite type, as conjectured by Carroll and Penland\cite{cp}. In a recent preprint Jiang\cite{jiang} has shown that the lamplighter group admits no translation-like actions by Baumslag-Solitar groups. Since the lamplighter is not hyperbolic, this disproves the hard direction of the geometric Gersten conjecture, although finitely presented counterexamples remain unknown.

Seward \cite[\S 1.(3')]{seward} asked about the other direction---whether Baumslag-Solitar groups may act translation-like on hyperbolic groups. Our main theorem gives a negative answer to this question.

\begin{theorem}
\label{theorem:main}
Let $G$ be the fundamental group of a closed hyperbolic 3-manifold. Then $\ZZ^2$ acts translation-like on $G$.
\end{theorem}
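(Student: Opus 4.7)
My plan is to reduce to a virtually fibered cover of $M$, construct the action there, and descend to $G$. By Agol's virtual fibering theorem, $G$ has a finite-index subgroup $G'$ isomorphic to $N \rtimes_\phi \ZZ$, where $N = \pi_1(\Sigma)$ is a closed hyperbolic surface group and $\phi$ is pseudo-Anosov. A translation-like $\ZZ^2$-action on $G'$ descends to one on $G$ coset-by-coset: choosing right coset representatives $\{g_i\}$ for $G'\backslash G$, I define $h \cdot (g g_i) = (h \cdot g) g_i$ for $h \in \ZZ^2$ and $g \in G'$. The $G$-displacement $|g_i^{-1} g^{-1} (h\cdot g) g_i|_G$ is bounded by $2\max_i |g_i|_G$ plus a constant times the bounded $G'$-displacement of $h$, the constant being the maximum $G$-length of a $G'$-generator. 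So it suffices to produce the action on $G'$.

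A first natural attempt---taking one generator of $\ZZ^2$ to be right-multiplication by the generator $t$ of the $\ZZ$-factor---is obstructed. Any bijection $\sigma\colon G' \to G'$ commuting with right-$t$-multiplication has the form $\sigma(n t^k) = \bar\sigma(n)\, t^{m(n) + k}$, where $\bar\sigma\colon N \to N$ is a bijection and $m\colon N \to \ZZ$. The $G'$-displacement of $\sigma$ at $nt^k$ equals $|\phi^{-k}(n^{-1} \bar\sigma(n))\, t^{m(n)}|_{G'}$, and for this to be uniformly bounded we must have $\bar\sigma = \Id$: otherwise, for $x = n^{-1} \bar\sigma(n) \neq 1$ in $N$, we would need $|\phi^{-k}(x)|_{G'} = |t^{-k} x t^k|_{G'}$ bounded in $k$, but this is comparable to $d_{\HH^3}(t^k x_0, x \cdot t^k x_0)$, which diverges with $|k|$ because $t^k x_0$ approaches a fixed point $\xi_+$ of $t$ on $\partial \HH^3$ while no nontrivial element of $N$ fixes $\xi_+$ (commuting torsion-free subgroups of the cocompact hyperbolic group $G'$ are cyclic). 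The only remaining candidate is a pure shift $\sigma(n t^k) = n t^{m(n) + k}$, which commutes with right-$t$-multiplication and is bounded, but for any such $\sigma$ the equation $(j,\ell) \cdot g = g$ with $g = n t^k$ reduces to $j m(n) + \ell = 0$, which has nontrivial integer solutions---so the generated $\ZZ^2$-action is never free.

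Thus neither generator of the translation-like $\ZZ^2$-action can be a right-multiplication. The plan is to construct both generators simultaneously by a Hall-type matching argument in the spirit of Whyte's proof of the geometric von Neumann--Day theorem. One builds two partitions of $G'$---a partition into bi-infinite ``horizontal'' lines giving the first $\ZZ$-action, and a transverse partition giving the second---so that the associated bijections commute and generate a free $\ZZ^2$-action. The needed Hall condition is verified using the exponential growth of $G'$, which provides the slack to match elements of any finite set to nearby elements compatibly with the constraints.

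The main obstacle is the coordination of the two layers of matching: one must set up an auxiliary bipartite graph whose perfect matchings simultaneously encode a pair of commuting, bounded-displacement, fixed-point-free bijections generating a free $\ZZ^2$, and verify Hall's condition on this graph. As the obstruction above illustrates, commutativity of bounded bijections is a rigid global constraint---not a local one---and threading it through the matching argument, while controlling the exponentially distorted fiber subgroup $N$ inside $G'$, is the crux of the proof.
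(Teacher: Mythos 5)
There is a genuine gap: the proposal never actually constructs the $\ZZ^2$-action. The reduction to a finite-index subgroup is fine (and the obstruction computation showing that no generator of the action can be right multiplication by the fiber direction $t$ is a nice, correct observation), but everything after that is a plan rather than a proof. The plan is to produce two commuting, fixed-point-free, bounded-displacement bijections ``simultaneously by a Hall-type matching argument,'' and you yourself identify the coordination of the two matchings as the crux. That crux is exactly the part that is missing, and it is not a routine verification: Hall's theorem produces a single perfect matching (equivalently, one bijection with prescribed local constraints), but commutativity of two bijections is a global constraint on orbit structure --- your own computation with $\sigma(nt^k)=\bar\sigma(n)t^{m(n)+k}$ illustrates how rigid it is --- and no bipartite graph is exhibited whose matchings would encode a commuting pair, let alone one generating a free $\ZZ^2$. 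As written, the argument cannot be completed along these lines without a substantial new idea.

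The paper's route supplies that idea by extracting the commutativity from hyperbolic geometry rather than from combinatorics, and you might compare your plan with it. One takes the dyadic net $X=\{(2^ca,2^cb,2^c):a,b,c\in\ZZ\}$ in the upper half-space model of $\HH^3$; the horizontal translations $(x,y,t)\mapsto(x+t,y,t)$ and $(x,y,t)\mapsto(x,y+t,t)$ preserve $X$, visibly commute, move every point a fixed hyperbolic distance, and generate a free $\ZZ^2$-action. Since $X$ is a net in $\HH^3$, Svarc--Milnor gives $X$ quasi-isometric to $G$, and Whyte's theorem that a quasi-isometry between nonamenable UDBG spaces lies at bounded distance from a bilipschitz bijection lets one conjugate the entire action over to $G$ at once. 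The matching argument you were reaching for is thus confined to Whyte's already-proved theorem, where it only has to produce one bijection and never has to respect a commutation relation; the $\ZZ^2$-structure comes for free from the Euclidean geometry of horospheres. Note also that with this approach the passage to a virtually fibered cover is unnecessary (though your descent argument for finite-index subgroups is correct).
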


\section{Proof of Theorem \ref{theorem:main}.}

Let $G$ be the fundamental group of a closed hyperbolic 3-manifold. We will prove Theorem \ref{theorem:main} by showing that $G$ is bilipschitz to a UDBG space which admits a translation-like action of $\ZZ^2$---the following lemma says that this is sufficient.

\begin{lemma}
\label{lemma:conjugation}
If $H$ acts translation-like on $X_1$, and $X_1$ is bilipschitz to $X_2$ then $H$ acts translation-like on $X_2$.
\end{lemma}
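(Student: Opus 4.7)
The plan is to transport the translation-like $H$-action on $X_1$ over to $X_2$ by conjugating through the bilipschitz bijection. Recall that for UDBG spaces, a bilipschitz equivalence is a bijection $f\colon X_1\To X_2$ together with a constant $L\geq 1$ satisfying
$$\tfrac{1}{L}\,d_1(x,x')\leq d_2(f(x),f(x'))\leq L\,d_1(x,x')$$
for all $x,x'\in X_1$. Given such an $f$ and the action $\cdot_1$ of $H$ on $X_1$, I would define an action of $H$ on $X_2$ by the formula
$$h\cdot_2 y := f\bigl(h\cdot_1 f^{-1}(y)\bigr).$$
Since $f$ is a bijection and $\cdot_1$ is an action, the associativity and identity axioms for $\cdot_2$ follow by a routine check.

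Next I would verify the two defining properties of a translation-like action. Freeness is immediate from the bijectivity of $f$: if $h\cdot_2 y=y$, applying $f^{-1}$ gives $h\cdot_1 f^{-1}(y)=f^{-1}(y)$, and freeness of $\cdot_1$ forces $h=1_H$. For bounded displacement, fix $h\in H$ and let $x=f^{-1}(y)$. Then the upper Lipschitz bound on $f$ yields
$$d_2(y,h\cdot_2 y) = d_2\bigl(f(x),f(h\cdot_1 x)\bigr) \leq L\cdot d_1(x,h\cdot_1 x)\leq L\cdot\sup_{x'\in X_1} d_1(x',h\cdot_1 x'),$$
and the latter supremum is finite because $H$ acts translation-like on $X_1$. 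Hence $\{d_2(y,h\cdot_2 y):y\in X_2\}$ is bounded, as required.

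There is essentially no obstacle here: the only mildly delicate point is insisting that a bilipschitz equivalence between UDBG spaces comes with a genuine inverse $f^{-1}$, which is the standard convention in this setting (the lower Lipschitz bound is what makes bilipschitz strictly stronger than quasi-isometry). With that convention in place, the lemma reduces to the observation above that conjugating a free, bounded-displacement action by a bilipschitz bijection produces another free, bounded-displacement action.
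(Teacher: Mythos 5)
Your proof is correct and follows the same route as the paper: conjugate the action through the bilipschitz bijection, observe that freeness is preserved, and use the upper Lipschitz constant to bound the displacement of each $h$. The paper's own argument is just a terser version of yours.
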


\begin{proof}
Let $\psi:X_1\To X_2$ be a bilipschitz map. We define a translation-like action of $H$ on $X_2$ by conjugating the action as follows. For $x\in X_2$, take
$$h\cdot x=\psi(h\cdot \psi^{-1}(x)).$$
It is clear that this is a free action, and it is translation-like because
$$d(x,h\cdot x)\leq \Lip(\psi)d(\psi^{-1}(x),h\cdot\psi^{-1}(x)).$$
\end{proof}

\begin{lemma}
\label{lemma:graph}
There exists a UDBG space $X$ such that $\ZZ^2$ acts translation-like on $X$ and $X$ is bilipschitz to $G$.
\end{lemma}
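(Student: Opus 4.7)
The plan is to build $X$ as an explicit horospherical discretization of $\HH^3$, write down a visibly commuting pair of translation-like shifts arising from the Euclidean structure on horospheres, and then invoke Whyte's theorem upgrading quasi-isometry to bilipschitz equivalence for non-amenable UDBG spaces in order to pull $X$ back to $G$.

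Working in the upper half-space model of $\HH^3$ with coordinates $(x,y,t)$, the horospheres centered at $\infty$ are the horizontal planes, the intrinsic metric on $\{t=c\}$ is $(dx^2+dy^2)/c^2$, and the hyperbolic distance between the horospheres $\{t=c_1\}$ and $\{t=c_2\}$ is $|\log c_1 - \log c_2|$. Fix $T_0 > 0$ and set
\[
X \;=\; \bigl\{\,(a\, e^{nT_0},\; b\, e^{nT_0},\; e^{nT_0}) : a,b,n\in\ZZ\,\bigr\},
\]
with the metric restricted from $\HH^3$. The spacing is tuned so that horizontal neighbors on any single grid horosphere sit at fixed hyperbolic distance $\cosh^{-1}(3/2)$, while different grid horospheres are separated by at least $T_0$. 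These estimates, together with the exponential volume growth of balls in $\HH^3$, exhibit $X$ as a UDBG space; coboundedness of $X$ in $\HH^3$ is immediate, so the inclusion is a quasi-isometry.

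Next I would define the candidate $\ZZ^2$-action by
\[
(m,k)\cdot (a\,e^{nT_0},\,b\,e^{nT_0},\,e^{nT_0}) \;=\; ((a+m)e^{nT_0},\,(b+k)e^{nT_0},\,e^{nT_0}),
\]
which is well-defined by uniqueness of the coordinates $(a,b,n)$, and manifestly free, commuting, and horosphere-preserving. The decisive point is that the Euclidean shift of size $\sqrt{m^2+k^2}\,e^{nT_0}$ becomes, after division by the horosphere height $t=e^{nT_0}$, a fixed intrinsic horospherical shift of size $\sqrt{m^2+k^2}$; hence the ambient hyperbolic displacement of $(m,k)$ is at most $\cosh^{-1}(1+(m^2+k^2)/2)$, uniformly in $n$. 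So the $\ZZ^2$-action on $X$ is translation-like.

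For the final step I would apply Whyte's bilipschitz equivalence theorem for non-amenable UDBG spaces \cite{whyte}: since $G$ is quasi-isometric to $\HH^3$ by the \v{S}varc--Milnor lemma, hence to $X$, and both spaces are non-amenable, they are in fact bilipschitz equivalent. Lemma \ref{lemma:conjugation} then transports the $\ZZ^2$-action from $X$ to $G$. The main obstacle is precisely this last step: the obvious geometric picture only produces a quasi-isometry between $X$ and $G$, and one needs non-amenability together with Whyte's theorem to upgrade it to the bilipschitz equivalence required by Lemma \ref{lemma:conjugation}.
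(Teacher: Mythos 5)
Your proposal is correct and follows essentially the same route as the paper: the paper takes $X=\{(2^c a,2^c b,2^c):a,b,c\in\ZZ\}$ (your construction with $e^{T_0}$ replaced by $2$), defines the same coordinate-shift $\ZZ^2$-action, and likewise upgrades the quasi-isometry $X\to G$ to a bilipschitz map via Whyte's theorem on nonamenable UDBG spaces before conjugating the action through Lemma \ref{lemma:conjugation}.
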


\begin{proof}

Consider the set of points
$$X=\{(2^c a,2^c b, 2^c):a,b,c\in\ZZ\}$$
in the upper half space model of $\HH^3$. The reader may verify that this is indeed a UDBG space (the shortest distance is $\log(2)$ and it is not hard to see that the size of $r$-balls in $X$ is roughly exponential in $r$).

To define a translation-like action of $\ZZ^2$ on $X$, let the generators $e_1,e_2$ of $\ZZ^2$ act by
$$e_1\cdot (2^c a,2^c b,2^c)=(2^c(a+1),2^c b,2^c)$$
and
$$e_2\cdot (2^c a,2^c b,2^c)=(2^c a,2^c (b+1),2^c).$$
These maps commute, each moves points by a distance of $1$, and the $\ZZ^2$-action they induce is clearly free, so it is translation-like.

Observe that $X$ is quasi-isometric to $\HH^3$ because every point of $\HH^3$ lies within a bounded distance of $X\subset\HH^3$. Thus, by the Svarc-Milnor theorem, $X$ is quasi isometric to $G$. By \cite[Theorem 2]{whyte}, any quasi-isometry between nonamenable UDBG spaces is at a bounded distance from a bilipschitz map, so $X$ is bilipschitz to $G$.
\end{proof}

Combining Lemma \ref{lemma:conjugation} and Lemma \ref{lemma:graph}, we have proved Theorem \ref{theorem:main}

\section{Questions}
We close with three questions.
\paragraph{Other Baumslag-Solitar groups.} Do any hyperbolic groups admit translation-like actions of Baumslag-Solitar groups $BS(m,n)$ with $m\geq 2$?

\paragraph{Other hyperbolic groups.} Which hyperbolic groups admit translation-like actions of $\ZZ^2$? Jiang\cite{jiang} has recently observed one may use results of \cite{bst} to show that $\ZZ^2$ cannot act translation-like on free groups, and it appears \cite[Proposition 4.1]{bst} that this technique may be used to rule out translation-like actions of $\ZZ^2$ on hyperbolic surface groups, but we have no idea whether such actions exist on hyperbolic one-relator groups or on random groups.

\paragraph{Gromov-Furstenberg for returns of the horospherical flow in a hyperbolic 3-manifold.} (See \cite{bk} for context). Let $\Gamma$ be a cocompact lattice in $PSL(2;\CC)$, let $H$ be an $\epsilon$-neighborhood of some horosphere $H_0$ in $\HH^3$, let $\ast\in\HH^3$, and consider the intersection $\mathcal{O}=(\Gamma\cdot\ast)\cap H$. If we equip $\mathcal{O}$ with the metric inherited from $H$, then $\mathcal{O}$ is quasi isometric to $H_0\cap(\Gamma\cdot B_\epsilon(\ast))$, where $B_\epsilon(\ast)$ denotes the $\epsilon$ ball around $\ast$ in $\HH^3$.  From Ratner's theorem \cite{ratner}, it then follows (with some thought) that $\mathcal{O}$ is quasi isometric to $\ZZ^2$. Must $\mathcal{O}$ be bilipschitz to $\ZZ^2$? This was our original attempt at finding a translation-like action of $\ZZ^2$ on $\Gamma$.

\section{Acknowledgments.} We wish to thank Kevin Whyte, Benson Farb, and Andy Putman for conversations. We wish to thank Yongle Jiang for sharing an early draft of his preprint with us. This work has been supported by NSF award 1502608.

\bibliographystyle{plain}
\bibliography{bibliography}
\end{document}